\documentclass[oneside,english]{amsart}
\usepackage[T1]{fontenc}
\usepackage[latin9]{inputenc}
\usepackage{geometry}
\geometry{verbose,tmargin=3cm,bmargin=3cm,lmargin=3cm,rmargin=3cm}
\usepackage{amstext}
\usepackage{amsthm}
\usepackage{amssymb}
\usepackage[all]{xy}
\PassOptionsToPackage{normalem}{ulem}
\usepackage{ulem}

\makeatletter

\newcommand{\noun}[1]{\textsc{#1}}

\numberwithin{equation}{section}
\numberwithin{figure}{section}
\theoremstyle{plain}
\newtheorem{thm}{\protect\theoremname}
  \theoremstyle{definition}
  \newtheorem{defn}[thm]{\protect\definitionname}
  \theoremstyle{remark}
  \newtheorem*{rem*}{\protect\remarkname}
  \theoremstyle{plain}
  \newtheorem{prop}[thm]{\protect\propositionname}
  \theoremstyle{definition}
  \newtheorem*{example*}{\protect\examplename}
  \theoremstyle{definition}
  \newtheorem{example}[thm]{\protect\examplename}

\newcommand{\xyR}[1]{
  \xydef@\xymatrixrowsep@{#1}}
\newcommand{\xyC}[1]{
  \xydef@\xymatrixcolsep@{#1}}

\makeatother
\usepackage{hyperref}
\hypersetup{
    colorlinks=true,       
    linkcolor=blue,          
    citecolor=blue,        
    filecolor=blue,      
    urlcolor=blue           
}

\usepackage{babel}
  \providecommand{\definitionname}{Definition}
  \providecommand{\examplename}{Example}
  \providecommand{\propositionname}{Proposition}
  \providecommand{\remarkname}{Remark}
\providecommand{\theoremname}{Theorem}

\begin{document}

\title{Smooth contractible threefolds with hyperbolic $\mathbb{G}_{m}$-actions
via ps-divisors}

\author{Charlie Petitjean}

\email{petitjean.charlie@gmail.com}
\begin{abstract}
The aim of this note is to give an alternative proof of the theorem
4.1 of \cite{KR4}, that is, a characterization of smooth contractible
affine varieties endowed with a hyperbolic action of the group $\mathbb{G}_{m}\simeq\mathbb{C}^{\text{*}}$,
using the language of polyhedral divisors developed in \cite{A-H}
as generalization of $\mathbb{Q}$-divisors.
\end{abstract}

\address{Instituto de Matem\'atica y F\'isica, Universidad de Talca, Casilla
721, Talca, Chile}

\subjclass[2000]{14L30, 14R05, 14R10}

\keywords{Affine $\mathbb{T}$-varieties, Hyperbolic $\mathbb{C}^{*}$-actions,
Koras-Russell threefolds, Exotic spaces}

\thanks{This research was partially supported by project Fondecyt postdoctorado
3160005.}

\maketitle
In \cite{KR4}, Koras and Russell provided a characterization of smooth
contractible affine varieties endowed with a hyperbolic action of
the group $\mathbb{G}_{m}\simeq\mathbb{C}^{\text{*}}$. This is an
important step in the proof of the linearization conjecture in dimension
three \cite{KaKMLR}. The conjecture states that every $\mathbb{G}_{m}$-action
on the affine $n$-space is linearizable, that is, up to a conjugation
by an automorphism of $\mathbb{A}^{n}$, we can assume that the action
is linear. The case $n=1$ is trivial and corresponds to the simplest
toric case, $\mathbb{G}_{m}$ acting on $\mathbb{A}^{1}$, for $n=2$
the original proof is due to Gutwirth \cite{G}, the case $n=3$ is
more difficult and obtained after a long series of articles initiated
by Kambayashi and Russell \cite{KamR} continued with Koras \cite{KR1,KR2,KR3,KR4}
and achieved with the contribution of Kaliman and Makar-Limanov \cite{KaML,KaKMLR}.
In higher dimension, that is for $n>3$, no general results are known.

On the other hand, the case of $\mathbb{G}_{m}$-action on $\mathbb{A}^{3}$
can be also viewed as the first case of a linearization conjecture
of complexity two, that is, algebraic torus actions of dimension $n-2$
acting effectively on $\mathbb{A}^{n}$. The linearization conjecture
of complexity zero corresponds to the toric case and it is true, as
the linearization conjecture of complexity one by a result of Bia\l ynicki-Birula
\cite{BB}. Here, once again, in higher dimension no general results
are known.

The purpose is to use a geometrico-combinatorial presentation of normal
varieties endowed with an effective algebraic torus action, developed
by Altmann and Hausen \cite{A-H} several years after the result of
Koras and Russell. This presentation is a generalization of two presentations,
the first one is the presentation of affine toric varieties via cones
in lattices \cite{CLS} and the second is the presentation of $\mathbb{G}_{m}$-surfaces
via $\mathbb{Q}$-divisors \cite{De,FZ}. A recollection in our particular
case will be made in the section one. The presentation of a smooth
affine variety $X$ endowed with a hyperbolic action of $\mathbb{G}_{m}$
consists of a pair $(Y,\mathcal{D})$ such that $Y$ is a normal variety
of dimension $\mathrm{dim}(X)-1$ playing essentially a role of quotient
and $\mathcal{D}$ is a divisor on $Y$ with particular coefficients
attached on each irreducible components. As it can be expected, this
presentation is particularly adapted to give a shorter proof of the
charactersation given initially by Theorem 4.1 in \cite{KR4}, this
is realized in section two. We hope that this note serves to clarify
and condense existing results in the field and can be the first step
for possible generalizations.

\section{Hyperbolic $\mathbb{G}_{m}$-actions on smooth affine varieties}

Let $X=\mathrm{Spec}(A)$ be an affine smooth variety endowed with
a hyperbolic $\mathbb{G}_{m}$-action. First, recall that the coordinate
ring $A$ of $X$ is $\mathbb{Z}$-graded in a natural way by its
subspaces of semi-invariants of weight $n$, $A_{n}$, for the effective
$\mathbb{G}_{m}$-action on $X$: 
\[
A_{n}:=\left\{ f\in A/f(\lambda\cdot x)=\lambda^{n}f(x),\forall\lambda\in\mathbb{G}_{m}\right\} .
\]
The action is said to be \emph{hyperbolic} if there is at least one
$n_{1}<0$ and one $n_{2}>0$ such that $A_{n_{1}}$ and $A_{n_{2}}$
are nonzero. In particular $A_{0}$ is the ring of invariant functions
on $X$, thus $q:X\rightarrow Y_{0}(X):=X/\!/\mathbb{G}_{m}=\mathrm{Spec}(A_{0})$
is the algebraic quotient to the $\mathbb{G}_{m}$-action on $X$. 

\noindent
\begin{defn}
\label{The-A-H-quotient-1} The \emph{A-H quotient} $Y(X)$ of $X$
is the blow-up $\pi:Y(X)\rightarrow Y_{0}(X)$ of $Y_{0}(X)$ with
center at the closed subscheme defined by the ideal $\mathcal{I}=\left\langle A_{d}\cdot A_{-d}\right\rangle $,
where $d>0$ is chosen such that $\bigoplus_{n\in\mathbb{Z}}A_{dn}$
is generated by $A_{0}$ and $A_{\pm d}$. 
\end{defn}
\begin{rem*}
If $X$ admits a unique fixed point $x_{0}$, then the center of the
blow-up is supported at the image of $x_{0}$ by the algebraic quotient
morphism $q$.
\end{rem*}
\begin{defn}
A \emph{segmental divisor} $\mathcal{D}$ on an algebraic variety
$Y$ is a formal finite sum $\mathcal{D}=\sum[o_{i},p_{i}]\otimes D_{i}$,
where $D_{i}$ are prime Weil divisors on $Y$ and $[o_{i},p_{i}]$
are closed intervals with rational bounds $o_{i}\leq p_{i}$. Every
element $n\in\mathbb{Z}$ determines a map from segmental divisors
to the group of Weil $\mathbb{Q}$-divisors on $Y$:
\end{defn}
\[
\mathcal{D}=\sum[o_{i},p_{i}]\otimes D_{i}\rightarrow\mathcal{D}(n)=\sum\left(\mathrm{min}[no_{i},np_{i}]\right)D_{i}=\sum q_{i}D_{i}.
\]

\begin{defn}
\label{A-porper-segmental-divisor,}A \emph{proper-segmental divisor
}(ps-divisor) $\mathcal{D}$ on a variety $Y$ is a segmental divisor
on $Y$ such that for every $n\in\mathbb{Z}$, $\mathcal{D}(n)$ satisfies
the following properties:
\end{defn}
\begin{enumerate}
\item [$(i)$] $\mathcal{D}(n)$ is a $\mathbb{Q}$-Cartier divisor on $Y$.
\item [$(ii)$] $\mathcal{D}(n)$ is semi-ample, that is, for some $p\in\mathbb{Z}_{>0}$,
$Y$ is covered by complements of supports of effective divisors linearly
equivalent to $\mathcal{D}(pn)$.
\item [$(iii)$] $\mathcal{D}(n)$ is big, that is, for some $p\in\mathbb{Z}_{>0}$,
there exists an effective divisor $D$ linearly equivalent to $\mathcal{D}(pn)$
such that $Y\setminus\mathrm{Supp}(D)$ is affine. 
\end{enumerate}
\noindent
\begin{defn}
A variety $Y$ is said to be semi-projective if its ring of regular
functions $\Gamma(Y,\mathcal{O}_{Y})$ is finitely generated and $Y$
is projective over $Y_{0}=\mathrm{Spec}(\Gamma(Y,\mathcal{O}_{Y}))$.
\end{defn}
Considering a ps-divisor $\mathcal{D}$ on a semi-projective variety
$Y$ the $\mathbb{Z}$-graded algebra

\[
A=\bigoplus_{n\in\mathbb{Z}}A_{n}=\bigoplus_{n\in\mathbb{Z}}\Gamma(Y,\mathcal{O}_{Y}(\mathcal{D}(n))),
\]

\begin{flushleft}
is finitely generated. The associated affine variety $X=\mathrm{Spec}(A)$
is therefore a $\mathbb{G}_{m}$-variety. In the case of hyperbolic
$\mathbb{G}_{m}$-action, the main theorem of \cite{A-H} can be reformulated
as follows:
\par\end{flushleft}
\begin{thm}
For any ps-divisor $\mathcal{D}$ on a normal semi-projective variety
$Y$ the scheme
\begin{flushleft}
\[
\mathbb{S}(Y,\mathcal{D})=\mathrm{Spec}(\bigoplus_{n\in\mathbb{Z}}\Gamma(Y,\mathcal{O}_{Y}(\mathcal{D}(n))))
\]
is a normal affine variety of dimension $\mathrm{dim}(Y)+1$ endowed
with an effective hyperbolic $\mathbb{G}_{m}$-action, whose A-H quotient
$Y(\mathbb{S}(Y,\mathcal{D}))$ is birationally isomorphic to $Y$.
\par\end{flushleft}
Conversely any normal affine variety $X$ endowed with an effective
hyperbolic $\mathbb{G}_{m}$-action is isomorphic to $\mathbb{S}(Y(X),\mathcal{D})$
for a suitable ps-divisor $\mathcal{D}$ on $Y(X)$.
\end{thm}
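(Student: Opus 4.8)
The plan is to deduce both directions from the general correspondence of Altmann and Hausen \cite{A-H} by specialising it to a one-dimensional torus $\mathbb{G}_{m}$ and translating the combinatorial data accordingly. In \cite{A-H} a normal affine variety with an effective action of a torus $T$ of rank $r$ is encoded by a \emph{polyhedral divisor} $\sum\Delta_{i}\otimes D_{i}$ on a normal semi-projective variety $Y$, whose coefficients $\Delta_{i}$ are polyhedra in $N_{\mathbb{Q}}=\mathrm{Hom}(M,\mathbb{Z})\otimes\mathbb{Q}$ sharing a common tail cone $\sigma$, subject to properness conditions (Cartier, semi-ample, big) on each evaluation $\mathcal{D}(u)$, $u\in\sigma^{\vee}\cap M$. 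For $T=\mathbb{G}_{m}$ we have $M=N=\mathbb{Z}$ and $N_{\mathbb{Q}}=\mathbb{Q}$, and the action is hyperbolic exactly when the weight monoid is all of $\mathbb{Z}$; equivalently $\sigma^{\vee}\cap M=\mathbb{Z}$, which forces the tail cone to be $\sigma=\{0\}$. Polyhedra in $\mathbb{Q}$ with tail cone $\{0\}$ are precisely the compact intervals $[o_{i},p_{i}]$ with $o_{i}\leq p_{i}$, so polyhedral divisors become the segmental divisors of the excerpt, and the evaluation $\mathcal{D}(u)=\sum_{i}\min_{v\in\Delta_{i}}\langle u,v\rangle D_{i}$ specialises to $\mathcal{D}(n)=\sum_{i}\min(no_{i},np_{i})D_{i}$. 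Under this dictionary the three properness conditions of \cite{A-H} are exactly conditions $(i)$--$(iii)$ of Definition \ref{A-porper-segmental-divisor,}, so a ps-divisor is nothing but a proper polyhedral divisor for $\mathbb{G}_{m}$ with trivial tail cone.

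For the first (direct) implication I would then invoke the construction side of \cite{A-H}: since $\mathcal{D}$ is proper and $Y$ is normal semi-projective, the algebra $A=\bigoplus_{n}\Gamma(Y,\mathcal{O}_{Y}(\mathcal{D}(n)))$ is finitely generated (already recorded in the excerpt), and \cite{A-H} shows that $X=\mathrm{Spec}(A)$ is normal of dimension $\dim Y+\mathrm{rank}(T)=\dim Y+1$ with an effective $T$-action. The action is hyperbolic because, with $\sigma=\{0\}$, every $n\in\mathbb{Z}$ lies in the weight cone and the bigness of $\mathcal{D}(n)$ yields nonzero sections $A_{n}$ for both positive and negative $n$. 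Finally, $A_{0}=\Gamma(Y,\mathcal{O}_{Y}(\mathcal{D}(0)))=\Gamma(Y,\mathcal{O}_{Y})$ since $\mathcal{D}(0)=0$, so $\mathrm{Spec}(A_{0})=Y_{0}(X)$, and one identifies the blow-up defining the A-H quotient $Y(X)$ with the contraction $Y\to Y_{0}$ of \cite{A-H}; this identification is birational, as claimed.

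For the converse I would run the reverse construction of \cite{A-H} in rank one. Starting from a normal affine $X=\mathrm{Spec}\big(\bigoplus_{n}A_{n}\big)$ with a hyperbolic $\mathbb{G}_{m}$-action, I would take $Y=Y(X)$ to be the A-H quotient of Definition \ref{The-A-H-quotient-1}, and reconstruct the coefficients as follows: each $A_{n}$ embeds into the function field of $Y$ as the space of sections of a uniquely determined divisor $\mathcal{D}(n)$ with $A_{n}=\Gamma(Y,\mathcal{O}_{Y}(\mathcal{D}(n)))$. The superadditivity $A_{m}\cdot A_{n}\subseteq A_{m+n}$ makes $n\mapsto\mathrm{ord}_{D_{i}}\mathcal{D}(n)$ concave and piecewise linear on each prime divisor $D_{i}$, and one reads off $o_{i}$ as its slope for $n\to+\infty$ and $p_{i}$ as its slope for $n\to-\infty$; rationality of these slopes and the inequality $o_{i}\leq p_{i}$ follow from finite generation and concavity. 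The resulting $\mathcal{D}=\sum[o_{i},p_{i}]\otimes D_{i}$ is then checked to be a ps-divisor satisfying $\mathbb{S}(Y,\mathcal{D})\cong X$ by \cite{A-H}.

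The main obstacle, and the point where the specialisation must be carried out with care rather than quoted verbatim, is the identification of the A-H quotient $Y(X)$ defined here by blowing up $Y_{0}(X)$ along the ideal $\langle A_{d}A_{-d}\rangle$ with the variety $Y$ underlying the polyhedral datum of \cite{A-H}. I expect to verify this by showing that this blow-up computes the same birational model as the $\mathrm{Proj}$ of the Rees-type algebra $\bigoplus_{n\geq0}(A_{d}A_{-d})^{n}$, and that semi-ampleness and bigness of the divisors $\mathcal{D}(n)$ guarantee that the contraction $Y\to Y_{0}$ coincides with it. Controlling the merely birational (rather than biregular) nature of this identification, together with confirming that hyperbolicity is equivalent to the tail cone being trivial, are the two places that will require the most attention.
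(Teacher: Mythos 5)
Your proposal is correct and takes essentially the same approach as the paper: the paper offers no independent argument for this theorem, stating it precisely as a reformulation of the main theorem of \cite{A-H} specialized to rank one, which is what you carry out in detail (hyperbolicity forcing trivial tail cone, compact segments as the rank-one polyhedral coefficients, the slope reading of the interval endpoints, and the properness dictionary). The two points you rightly flag as delicate --- identifying the blow-up of Definition~\ref{The-A-H-quotient-1} with the Altmann--Hausen base up to birational equivalence, and the equivalence of hyperbolicity with full weight monoid --- are exactly the content the paper implicitly delegates to \cite{A-H} (in particular its Section 11 downgrading procedure), so your treatment is, if anything, more complete than the paper's.
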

\noindent

A ps-divisor $\mathcal{D}$ such that $X\simeq\mathbb{S}(Y,\mathcal{D})$
can be obtained by the following downgrading (see \cite[section 11]{A-H})\uline{:}

Consider $X$ embedded as a $\mathbb{G}_{m}$-stable subvariety of
an affine affine toric variety. The calculation is then reduced to
the toric case by considering an embedding in $\mathbb{A}^{n}$ endowed
with a linear action of a torus $\mathbb{T}$ for a sufficiently large
$n$. By this way, the inclusion of $\mathbb{G}_{m}\hookrightarrow\mathbb{T}$
corresponds to an inclusion of the lattice $\mathbb{Z}$ of one parameter
subgroups of $\mathbb{G}_{m}$ in the lattice $\mathbb{Z}^{n}=N$
of one parameter subgroups of $\mathbb{T}$. We obtain the exact sequence:
\[
\xymatrix{0\ar[r] & \mathbb{Z}\ar[r]_{F} & N=\mathbb{Z}^{n}\ar[r]_{P}\ar@/_{1pc}/[l]_{s} & N'=\mathbb{Z}^{n}/\mathbb{Z}\ar[r] & 0}
,
\]
 where $F$ is given by the induced action of $\mathbb{G}_{m}$ on
$\mathbb{A}^{n}$ (it is the \emph{matrix of weights}) and $s$ is
a section of $F$. Let $v_{i}$, for $i=1,\ldots,n$, be the first
integral vectors of the unidimensional cone generated by the i-th
column vector of $P$ considered as rays in the lattice $N'\simeq\mathbb{Z}^{n-1}$.
Let $Z$ be the toric variety, of maximal dimension $(n-1)$, determined
by the coarsest fan containing all cones generated by subsets of $\{v_{1},\ldots,v_{n}\}$
in $N'$. Then each $v_{i}$ corresponds to a $\mathbb{T}'$-invariant
divisor where $\mathbb{T}'=\mathrm{Spec}(\mathbb{C}[N'^{\vee}]$,
for $i=1,\ldots,n$. By \cite[section 11]{A-H}, $Z$ contains the
A-H quotient $Y$ of $X$, as sub-variety, and the support of $D_{i}$
is obtained by restricting the $\mathbb{T}'$-invariant divisor corresponding
to $v_{i}$ to $Y$. The segment associated to the divisor $D_{i}$
is equal to $s(\mathbb{R}_{\geq0}^{n}\cap P^{-1}(v_{i}))$, it can
occur that the segment is a point. If $X$ is the affine space endowed
with a linear action of $\mathbb{G}_{m}$, the embedding of $X$ as
a $\mathbb{G}_{m}$-stable subvariety of an affine toric variety is
reduced to the identity. In this case, the A-H quotient of $\mathbb{A}^{n}$
is $Z$ itself.

Linear hyperbolic $\mathbb{G}_{m}$-actions on $\mathbb{A}^{3}$ have
been fully characterized by this method, see for instance \cite{L-P}.
Let $F=\overset{}{}^{t}(a_{1},a_{2},a_{3})$ where $-a_{1}$, $a_{2}$
and $a_{3}$ are strictly coprime positive integers, and let $s=(\alpha,\beta,\gamma)$
where $\alpha$, $\beta$, $\gamma$ are integers such that $s\circ F=1$.
Let $\rho(i,j)=\mathrm{gcd}(i,j)$ for $i,j=a_{1},a_{2},a_{3}$and
$\delta=\mathrm{gcd}(\frac{a_{2}}{\rho(a_{1},a_{2})},\frac{a_{3}}{\rho(a_{1},a_{3})})$.
In the case of linear hyperbolic $\mathbb{G}_{m}$-actions on $\mathbb{A}^{3}$,
\cite[Proposition 11]{L-P} gives:
\begin{prop}
\label{classification A3  } Let $X\simeq\mathbb{A}^{3}=\mathrm{Spec}(\mathbb{C}[x,z,t])$
endowed with a linear hyperbolic $\mathbb{G}_{m}$-action. Then $\mathbb{A}^{3}$
is the $\mathbb{G}_{m}$-equivariant cyclic quotient of a variety
$X'$ isomorphic to $\mathbb{A}^{3}$ and such that $X'$ is equivariantly
isomorphic to the $\mathbb{G}_{m}$-variety $\mathbb{S}(Y,\mathcal{D})$
with $Y$ and $\mathcal{D}$  defined as follows: 
\end{prop}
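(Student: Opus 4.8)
The plan is to run the downgrading procedure of \cite[Section~11]{A-H} recalled above on the toric variety $\mathbb{A}^{3}$ carrying its linear $\mathbb{G}_{m}$-action, to read off $Y$ and $\mathcal{D}$ from the resulting combinatorics, and then to account for the cyclic quotient by analysing the multiplicity carried by the exceptional ray.

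First I would fix the toric data. As the action is linear, $\mathbb{A}^{3}=\mathrm{Spec}(\mathbb{C}[x,z,t])$ is a toric variety for $\mathbb{T}=(\mathbb{C}^{*})^{3}$ and $\mathbb{G}_{m}\hookrightarrow\mathbb{T}$ is the one-parameter subgroup with weight matrix $F={}^{t}(a_{1},a_{2},a_{3})$; as noted in the excerpt the embedding is then the identity and the A-H quotient equals $Y=Z$. Since the weights are strictly coprime the vector $(a_{1},a_{2},a_{3})$ is primitive, so a projection $P\colon\mathbb{Z}^{3}\to N'=\mathbb{Z}^{2}$ with $\ker P=\mathbb{Z}\cdot(a_{1},a_{2},a_{3})$ and a section $s=(\alpha,\beta,\gamma)$ with $s\circ F=1$ both exist. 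Writing each column as $P(e_{i})=m_{i}v_{i}$ with $v_{i}$ primitive, a short divisibility argument identifies $m_{i}=\rho(a_{j},a_{k})$ for $\{i,j,k\}=\{1,2,3\}$; in particular the exceptional ray carries $m_{1}=\rho(a_{2},a_{3})$, and a prime-by-prime comparison shows $\rho(a_{2},a_{3})=\delta$. From $P(a_{1},a_{2},a_{3})=0$, i.e. $|a_{1}|\,m_{1}v_{1}=a_{2}m_{2}v_{2}+a_{3}m_{3}v_{3}$, the ray $v_{1}$ lies in the interior of $\langle v_{2},v_{3}\rangle$, so the fan has the two maximal cones $\langle v_{1},v_{2}\rangle$ and $\langle v_{1},v_{3}\rangle$ and realises $Y$ as the blow-up, at its fixed point, of the affine toric surface $Y_{0}$ attached to the cone $\langle v_{2},v_{3}\rangle$, in accordance with Definition \ref{The-A-H-quotient-1}.

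Next I would compute the segments $s(\mathbb{R}^{3}_{\geq 0}\cap P^{-1}(v_{i}))$. Parametrising the fibre as $e_{i}/m_{i}+r\,(a_{1},a_{2},a_{3})$ and applying $s$, using $s(a_{1},a_{2},a_{3})=1$, the image is $s_{i}/m_{i}+r$ with $r$ constrained by the positivity of each coordinate. Because $a_{1}<0<a_{2},a_{3}$, the positive-weight rays force $r=0$, so $D_{2}$ and $D_{3}$ collapse to the single points $\beta/m_{2}$ and $\gamma/m_{3}$, while $D_{1}$ yields a genuine interval of length $1/(\delta\,|a_{1}|)$; this determines $\mathcal{D}$. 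Conditions $(i)$, $(ii)$, $(iii)$ of a ps-divisor then follow from $Y\to Y_{0}$ being projective and each $\mathcal{D}(n)$ being a torus-invariant $\mathbb{Q}$-divisor on the surface $Y$, so that $\mathbb{S}(Y,\mathcal{D})$ is defined and, by the Altmann--Hausen theorem recalled above, is a normal affine threefold with a hyperbolic $\mathbb{G}_{m}$-action.

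The hard part is the passage from the multiplicity $\delta$ on $D_{1}$ to the cyclic quotient, and this is where I expect the real work. Since $\delta=\rho(a_{2},a_{3})$ divides $a_{2}$ and $a_{3}$, I would split $\delta=d_{2}d_{3}$ into coprime factors, assigning the $p$-part of $\delta$ to $d_{2}$ or to $d_{3}$ according to whether $v_{p}(a_{2})\leq v_{p}(a_{3})$ or not, which arranges $\gcd(a_{2}/d_{2},a_{3}/d_{3})=1$. Extracting a $d_{2}$-th root $u$ of $z$ and a $d_{3}$-th root $v$ of $t$ then produces $X'\cong\mathbb{A}^{3}=\mathrm{Spec}(\mathbb{C}[x,u,v])$ with the reduced weights $(a_{1},a_{2}/d_{2},a_{3}/d_{3})$, on which $\mu_{\delta}\cong\mu_{d_{2}}\times\mu_{d_{3}}$ acts by these roots and commutes with $\mathbb{G}_{m}$; an invariant-ring computation gives $\mathbb{C}[x,z,t]=\mathbb{C}[x,u,v]^{\mu_{\delta}}$ with the grading preserved, i.e. $\mathbb{A}^{3}=X'/\mu_{\delta}$ equivariantly. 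Because $X'$ has coprime positive weights its exceptional multiplicity is $\rho(a_{2}/d_{2},a_{3}/d_{3})=1$, so running the downgrading on $X'$ yields a primitive interval on $D_{1}$, and this clean ps-divisor is the $(Y,\mathcal{D})$ of the statement. The delicate points I anticipate are checking that $X'$ is genuinely smooth (equivalently $\cong\mathbb{A}^{3}$) via the Altmann--Hausen smoothness criterion—the possible singularities of $Y$ being absorbed by the slopes of the interval on $D_{1}$—and keeping the bookkeeping of $\alpha,\beta,\gamma$ and the various $\rho(a_{i},a_{j})$ consistent across the quotient.
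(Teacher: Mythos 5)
Your opening steps are sound, and they follow the route the paper intends: the paper itself gives no proof of this Proposition (it is quoted from \cite[Proposition 11]{L-P}), but the downgrading of \cite[Section 11]{A-H} recalled in Section 1 is exactly the right tool. Your identification of the multiplicities $m_i=\rho(a_j,a_k)$, the identity $\rho(a_2,a_3)=\delta$, the position of $v_1$ in the interior of $\langle v_2,v_3\rangle$, and the segments themselves (points $\beta/\rho(a_1,a_3)$ and $\gamma/\rho(a_1,a_2)$ on the two boundary rays, an interval of length $1/(\delta(-a_1))$ on the interior ray) are all correct, and they reproduce the paper's worked example for the weights $(2,3,-6)$. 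But you should have noticed that these coefficients do \emph{not} agree with the displayed $\mathcal{D}$: up to the positional relabelling of $(\alpha,\beta,\gamma)$, the coefficient $\alpha\rho(a_1,a_2)/(-a_1)$ equals your $1/\rho(a_1,a_3)$-denominator coefficient precisely when $-a_1=\rho(a_1,a_2)\rho(a_1,a_3)$. That discrepancy is the entire content of the cyclic quotient in the statement, and it is the step your proof gets wrong.

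The genuine gap is that you attach the cyclic quotient to the wrong factor. The group that must be removed is not $\delta=\gcd(a_2,a_3)$ but $d:=-a_1/\bigl(\rho(a_1,a_2)\rho(a_1,a_3)\bigr)$. Indeed, the sublattice $\mathbb{Z}v_2+\mathbb{Z}v_3$ has index $d$ in $N'$ (the index of $\langle P(e_2),P(e_3)\rangle$ is $|a_1|$ and $P(e_i)=m_iv_i$), so the surface $Y_0$ attached to $\langle v_2,v_3\rangle$ is $\mathbb{A}^2/\mu_d$ -- this is Koras's theorem \cite{K} -- and it is singular whenever $d>1$. Your cover $X'$, obtained by rooting the positive-weight coordinates $z,t$, leaves $a_1$, $\rho(a_1,a_2)$, $\rho(a_1,a_3)$, hence $d$, unchanged (since $\gcd(a_1,\delta)=1$), so its A-H quotient is still the blow-up of the \emph{singular} surface $\mathbb{A}^2/\mu_d$: condition $(i)$ of the Proposition fails, and this cannot be ``absorbed by the slopes of the interval,'' because $(i)$ asserts $Y$ is the blow-up of $\mathbb{A}^2$ at a point, a smooth surface. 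Moreover your $X'$ carries a primitive interval on the exceptional ray, whereas the displayed $\mathcal{D}$ explicitly \emph{retains} the multiplicity $\delta$ (its interval has length $1/(\delta(-a_1))$) and nonzero point coefficients on $D_2,D_3$; so your ``clean'' divisor is the presentation of a different variety, not the $\mathcal{D}$ of the statement. The correct cover goes along the negative-weight coordinate: take $X'=\mathrm{Spec}(\mathbb{C}[x',z,t])$ with weights $(a_1/d,a_2,a_3)$ and let $\mu_d$ act on $x'$ alone, so $X=X'/\mu_d$ equivariantly (invariants $\mathbb{C}[x'^d,z,t]$). One checks $\rho(a_1/d,a_2)=\rho(a_1,a_2)$, $\rho(a_1/d,a_3)=\rho(a_1,a_3)$ and $\delta$ is unchanged, so the new negative weight satisfies $-a_1'=\rho(a_1',a_2)\rho(a_1',a_3)$; then $v_2,v_3$ generate $N'$, the algebraic quotient of $X'$ is $\mathbb{A}^2$, its A-H quotient is the blow-up of $\mathbb{A}^2$ at the origin, and re-running your (correct) segment computation on $X'$ yields exactly the displayed coefficients, with $-a_1$ read as the negative weight of the cover $X'$.
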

\begin{enumerate}
\item [$(i)$] $Y$ is isomorphic to a blow-up $\pi:\tilde{\mathbb{A}}^{2}\rightarrow\mathbb{A}^{2}$
of $\mathbb{A}^{2}$ at the origin.
\item [$(ii)$] $\mathcal{D}$ is of the form: 
\[
\mathcal{D}=\left\{ \frac{\alpha\rho(a_{1},a_{2})}{-a_{1}}\right\} \otimes D_{2}+\left\{ \frac{\beta\rho(a_{1},a_{3})}{-a_{1}}\right\} \otimes D_{3}+\left[\frac{\gamma}{\delta},\frac{\gamma}{\delta}+\frac{1}{-\delta a_{1}}\right]\otimes E,
\]
 with $D_{2}$, $D_{3}$ are the strict transforms of the coordinate
axes, $E$ is the exceptional divisor of $\pi$ and the linear $\mathbb{G}_{m}$-action
on $X$ is given by its matrix of weights $F=\overset{}{}^{t}(a_{1},a_{2},a_{3})$
with $-a_{1}$, $a_{2}$ and $a_{3}$ strictly positive integers.
\end{enumerate}
Let $X$ be equivariantly isomorphic to $\mathbb{S}(Y,\mathcal{D})$.
Then segmental prime divisors $[a_{i},b_{i}]\otimes D_{i}$ occurring
in the ps-divisor $\mathcal{D}=\sum_{i=1}^{n}[o_{i},p_{i}]\otimes D_{i}$
encode, according to their coefficients, two distinct facts (see \cite{P}
and \cite[Theorem 4.18]{FZ} ) :
\begin{enumerate}
\item [$(i)$] For any $i$ such that $[o_{i},p_{i}]$ is reduced to a rational
point $\{p_{i}\}$, the divisor $\{p_{i}\}\otimes D_{i}$ encodes
isotropy subgroup of finite order, or equivalently the fact that $X$
is the finite cyclic cover of an other $\mathbb{G}_{m}$-variety having
a A-H quotient isomorphic to that of $X$ and where $D_{i}$ does
not appear in the the presentation.
\item [$(ii)$] For any $i$ such that $[o_{i},p_{i}]$ is an interval with
non-empty interior, the divisor $[o_{i},p_{i}]\otimes D_{i}$ encodes
isotropy subgroup of infinite order, equivalently fixed points.
\end{enumerate}
In particular as we have assumed that $X$ admits a unique fixed point,
the only divisor whose coefficient is an interval with non-empty interior
is the exceptional divisor of the blow-up $\pi:Y\rightarrow Y_{0}$.\\
The smoothness of the $\mathbb{G}_{m}$-threefold $X$ can be checked
using \cite[Theorem 7 and proposition 11]{L-P}: 
\begin{enumerate}
\item [$(iii)$]If $X=\mathbb{S}(Y,\mathcal{D})$ is an affine $\mathbb{G}_{m}$-variety
of dimension $n$, then, $X$ is smooth if and only if the combinatorial
data $(Y,\mathcal{D})$ is locally isomorphic in the \'etale topology
to the combinatorial data of the affine space endowed with a linear
$\mathbb{G}_{m}$-action. 
\end{enumerate}
\begin{example*}
Let $X\simeq\mathbb{A}^{3}=\mathrm{Spec}(\mathbb{C}[x,y,z])$ endowed
with the $\mathbb{G}_{m}$-action with matrix of weights $F=\overset{}{}^{t}(2,3,-6)$.
Then $\mathbb{A}^{3}$ is equivariantly isomorphic to $\mathbb{S}(\tilde{\mathbb{A}}^{2},\mathcal{D})$
where $\pi:\tilde{\mathbb{A}}^{2}\rightarrow\mathbb{A}^{2}$ is the
blow-up of the algebraic quotient $\mathbb{A}^{2}=\mathrm{Spec}(\mathbb{C}[u,v])\mathrm{=Spec}(\mathbb{C}[x^{3}z,y^{2}z])$
with center at the closed subscheme defined by the ideal $\mathcal{I}=(u,v)$,
the p-s divisor $\mathcal{D}$ is of the form

\[
\mathcal{D}=\left\{ -\frac{1}{3}\right\} \otimes D_{1}+\left\{ \frac{1}{2}\right\} \otimes D_{2}+\left[0,\frac{1}{6}\right]\otimes E,
\]
 where $D_{1}$, $D_{2}$ are the strict transforms of the coordinates
axes $\{u=0\}$ and $\{v=0\}$, and $E$ is the exceptional divisor
of the blow-up $\pi$.

$\mathbb{A}^{3}$ admits commuting actions of the groups $\mu_{2}$
and $\mu_{3}$ of square and cubic roots of the unity defined respectively
by $\epsilon\times(x,y,z)\rightarrow(\epsilon x,y,\epsilon z)$ and
$\varsigma\times(x,y,z)\rightarrow(x,\varsigma y,\varsigma z)$. These
commute with the $\mathbb{G}_{m}$-action and $X\simeq\mathbb{A}^{3}$
has a structure of a $\mathbb{G}_{m}$-equivariant bi-cyclic cover
of $X//(\mu_{2}\times\mu_{3})\simeq\mathbb{A}^{3}$ with matrix of
weights $F=\overset{}{}^{t}(1,1,-1)$. Applying the main Theorem of
\cite{P}, $X//(\mu_{2}\times\mu_{3})$ is equivariantly isomorphic
to $\mathbb{S}(Y,[-1,0]\otimes E)$. The ring of regular functions
of $X$ is $\mathbb{Z}$-graded via $M$ the character lattice of
$\mathbb{G}_{m}$ and the ring of regular functions of $X//(\mu_{2}\times\mu_{3})$
is $\mathbb{Z}$-graded via a sublattice $M'\subset M$ of index $6$,
thus A-H presentation of $X//(\mu_{2}\times\mu_{3})$ is obtained
considering the same A-H quotient $Y$ and a multiple of $\mathcal{D}$,
namely, $6\mathcal{D}\sim[-1,0]\otimes E$. The structure of $\mathbb{G}_{m}$-equivariant
bi-cyclic covering implies that the induced action of $\mu_{2}\times\mu_{3}$
on $Y$ is trivial so that $Y\simeq Y(X//(\mu_{2}\times\mu_{3}))$
. So, only the ps-divisors change as the lattice changes (see \cite{P}).
This is illustrated by the following diagram,
\end{example*}
\begin{center}
\[
\xyC{2pc}\xymatrix{ & X\simeq\mathbb{A}^{3}\simeq\mathbb{S}(Y,\mathcal{D})\ar[ld]\ar[rd]\\
X//\mu_{3}\simeq\mathbb{A}^{3}\simeq\mathbb{S}(Y,3\mathcal{D})\ar[rd] &  & X//\mu_{2}\simeq\mathbb{A}^{3}\simeq\mathbb{S}(Y,2\mathcal{D})\ar[ld]\\
 & X//(\mu_{2}\times\mu_{3})\simeq\mathbb{A}^{3}\simeq\mathbb{S}(Y,6\mathcal{D})
}
\]
\par\end{center}

\noindent

One way to constructs many examples of varieties endowed with hyperbolic
$\mathbb{G}_{m}$-action is the following (see \cite{T,Z}):
\begin{defn}
\label{Hyperbolic modification}Let $X=\{f(u_{2},\ldots,u_{n})\}\subset\mathbb{A}^{n-1}$
be a smooth hypersurface defined by the zeros of the polynomial $f$.
\emph{The hyperbolic modification of $X$} is obtained blowing-up
the variety $\mathbb{A}^{1}\times X\subset\mathbb{A}^{1}\times\mathbb{A}^{n-1}$
with center at the origin and removing the proper transform of $\{0\}\times X$.
By this way we obtain a variety $X_{f}=\{f(x_{1}x_{2},\ldots,x_{1}x_{n})/x_{1}=0\}\subset\mathbb{A}^{n}$
which is stable for the following hyperbolic $\mathbb{G}_{m}$-action
on $\mathbb{A}^{n}$, $\lambda\cdot(x_{1},x_{2},\ldots,x_{n})=(\lambda^{-1}x_{1},\lambda x_{2},\ldots,\lambda x_{n})$.
\end{defn}
Each of these hyperbolic modifications will be characterized by the
following A-H presentation: $X_{f}$ is equivariantly isomorphic to
$\mathbb{S}(\tilde{X},[-1,0]\otimes E)$ where $\pi:\tilde{X}\rightarrow X$
is the blow-up of $X$ with center at the closed subscheme defined
by the ideal $\mathcal{I}=(u_{2},\ldots,u_{n})=(x_{1}x_{2},\ldots,x_{1}x_{n})$,
and $E$ is the exceptional divisor of the blow-up.

\section*{Constructions of koras-Russell threefolds and $\mathbb{A}^{3}$ via
segmental divisors}

In this section we will consider an approach to the classification
given by Koras and Russell using segmental divisors in an \'etale neighborhood
of a fixed point to determine all possibles configurations for the
threefolds.

The tangent space of a smooth $\mathbb{G}_{m}$-variety at the fixed
point is an affine three-space with a linear $\mathbb{G}_{m}$-action.
The action $\mathbb{G}_{m}\times\mathbb{A}^{3}\rightarrow\mathbb{A}^{3}$
is given by $\lambda\cdot(x,y,z)=(\lambda^{a_{1}}x,\lambda^{a_{2}}y,\lambda^{a_{3}}z)$
and characterized by its matrix of weight $F=\overset{}{}^{t}(a_{1},a_{2},a_{3})$.

In \cite{K}, Koras proves that if $\mathbb{A}^{3}$ is endowed with
an algebraic $\mathbb{G}_{m}$-action such that the algebraic quotient
is of dimension $2$, then the quotient is isomorphic to $\mathbb{A}^{2}/\mu$,
where $\mu$ is a finite cyclic group. If a variety $X$ has such
algebraic quotient by a $\mathbb{G}_{m}$-action it was said that
$X$ has quotient as expected for a $\mathbb{G}_{m}$-action on $\mathbb{A}^{3}$.

If $X$ is endowed with an action of the group $\mu_{i}$ of $i$-th
roots of the unity commuting with the $\mathbb{G}_{m}$-action, then
its algebraic quotient $X/\!/\mathbb{G}_{m}$ inherits also of a $\mu_{i}$-action,
possibly trivial.
\begin{thm}
\label{thm:A-smooth,-contractible}A smooth, contractible, affine
threefolds $X=\mathrm{Spec}(A)$ with a hyperbolic $\mathbb{G}_{m}$-action,
an unique fixed point and an algebraic quotient isomorphic to $\mathbb{A}^{2}/\mu$,
where $\mu$ is a finite cyclic group, is determined by the following
data:

$(a)$ A triple of weights $a_{1},a_{2},a_{3}$ with $-a_{1},a_{2},a_{3}>0$.
These define a hyperbolic $\mathbb{G}_{m}$-action on the tangent
space of the fixed point $\mathbb{A}^{3}$ via the matrix weight $F=\overset{}{}^{t}(a_{1},a_{2},a_{3})$ 

$(b)$ A triple of positive integers $s=(\alpha,\beta,\gamma)$ such
that $s\circ F=1$.

$(c)$ Curves $C_{2}$, $C_{3}$ in $\mathbb{A}^{2}$ satisfying:

\quad$(i)$ $C_{i}$ is defined by an $\mu_{a_{1}}$-homogeneous
polynomial.

\quad$(ii)$ $C_{i}\simeq\mathbb{A}^{1}$

\quad$(iii)$ $C_{2}$ and $C_{3}$ meet normally in $(0,0)$ and
$d-1$ additional points.

\end{thm}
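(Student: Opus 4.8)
The plan is to parametrize the classification by the combinatorial data governing the A-H presentation near the unique fixed point, and to show that $X$ reconstructs from $(a)$–$(c)$ via the ps-divisor machinery of Section~1. The starting observation is that the tangent space at the fixed point carries a linear hyperbolic $\mathbb{G}_m$-action, which by the smoothness criterion $(iii)$ determines the combinatorial data $(Y,\mathcal{D})$ \'etale-locally. So the weights $a_1,a_2,a_3$ with $-a_1,a_2,a_3>0$ are forced as in item $(a)$, and the section $s=(\alpha,\beta,\gamma)$ with $s\circ F=1$ recording the splitting of the downgrading sequence gives item $(b)$. The key structural input is Proposition~\ref{classification A3  }: it tells us precisely what the ps-divisor of the \emph{model} linear action $\mathbb{A}^3$ looks like, namely point-coefficients $\{\alpha\rho(a_1,a_2)/(-a_1)\}$, $\{\beta\rho(a_1,a_3)/(-a_1)\}$ on the strict transforms $D_2,D_3$ of the axes and an interval $[\gamma/\delta,\gamma/\delta+1/(-\delta a_1)]$ on the exceptional divisor $E$.

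First I would set up the A-H quotient $Y(X)$ from Definition~\ref{The-A-H-quotient-1}: since the algebraic quotient $Y_0=X/\!/\mathbb{G}_m\simeq\mathbb{A}^2/\mu$ and $X$ has a unique fixed point $x_0$, the blow-up center is supported at $q(x_0)$, so $Y(X)$ is a blow-up of $\mathbb{A}^2/\mu$ at a single point. Passing to the $\mu$-cover identifies the relevant quotient upstairs with $\tilde{\mathbb{A}}^2\to\mathbb{A}^2$, matching item $(i)$ of Proposition~\ref{classification A3  }. Next I would transport the point-coefficient data: by the dictionary $(i)$–$(ii)$ following Proposition~\ref{classification A3  }, the reduced (point) coefficients on $D_2,D_3$ encode finite cyclic isotropy, hence are rigidly determined by the weights and the section exactly as in the linear model; the unique interval coefficient, corresponding to the infinite isotropy at the fixed point, must sit on the exceptional divisor $E$. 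This is where the curves $C_2,C_3$ enter: the supports of $D_2,D_3$ on $Y(X)$ are the strict transforms of the images of $C_2,C_3$ in the quotient, and the requirement that $X$ be smooth and contractible constrains these curves.

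The crux — and **the hard part will be** — proving the curve conditions $(c)(i)$–$(iii)$ are both necessary and sufficient. I would argue necessity by applying the \'etale-local smoothness criterion $(iii)$ at every point of $Y(X)$ lying over the support of $\mathcal{D}$: at a smooth point of a single component the local model is an $\mathbb{A}^1$-slice (forcing each $C_i\simeq\mathbb{A}^1$, condition $(ii)$), while at an intersection point of $D_2$ and $D_3$ the local combinatorial data must match that of a linear action on $\mathbb{A}^3$, which forces the two curves to meet transversally (\emph{normal} crossing, condition $(iii)$). The $\mu_{a_1}$-homogeneity in $(i)$ follows from the fact that $C_2,C_3$ arise as strict transforms of axes in the $\mu$-cover, so they are level sets of $\mu_{a_1}$-semi-invariants. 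The count of $d-1$ extra intersection points is the delicate bookkeeping step: I expect it to emerge from a global degree/Euler-characteristic computation forcing contractibility — each transversal intersection beyond the fixed point contributes a cycle that must be killed, and the integer $d$ from Definition~\ref{The-A-H-quotient-1} (generating $\bigoplus A_{dn}$) controls exactly how many such intersections the blow-up absorbs. For sufficiency I would run the construction in reverse: given $(a)$–$(c)$, build the ps-divisor $\mathcal{D}$ on the blow-up $Y$ with the prescribed point-coefficients on the strict transforms of $C_2,C_3$ and the interval $[\gamma/\delta,\gamma/\delta+1/(-\delta a_1)]$ on $E$, verify properness via the semi-ampleness and bigness criteria of Definition~\ref{A-porper-segmental-divisor,}, and invoke the main theorem of Section~1 to produce $X=\mathbb{S}(Y,\mathcal{D})$; smoothness and contractibility then follow from $(iii)$ and the $\mathbb{A}^1$-nature of the curves respectively.
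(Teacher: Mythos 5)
Your first two steps (realizing the A-H quotient as a blow-up of $\mathbb{A}^{2}/\mu$ at the image of the unique fixed point, and transporting the coefficients of the linear model of Proposition \ref{classification A3  } onto $D_{2}$, $D_{3}$, $E$ via the \'etale-local smoothness criterion) do match the paper's first and second steps. But there is a genuine gap at the heart of your argument: you derive condition $(c)(ii)$, $C_{i}\simeq\mathbb{A}^{1}$, from the \'etale-local smoothness criterion (``at a smooth point of a single component the local model is an $\mathbb{A}^{1}$-slice''). This cannot work: \'etale-locally every smooth curve looks like $\mathbb{A}^{1}$, so a local criterion can only force the components of the support of $\mathcal{D}$ to be smooth and to cross normally --- it can never detect the global isomorphism type of $C_{i}$. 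In the paper this is precisely where contractibility does its work, through a global chain absent from your proposal: the point coefficients on $D_{2}$, $D_{3}$ exhibit $X$ as a bi-cyclic cover of $\mathbb{A}^{3}$ (via \cite{P}), the intermediate quotients $X//\mu_{\zeta}$ and $X//\mu_{\xi}$ are contractible by \cite[Theorem B]{KrPRa}, these quotients are cyclic covers of $\mathbb{A}^{3}$ branched along the hyperbolic modifications $\{f(x_{1}x_{2},x_{1}x_{3})/x_{1}=0\}$ and $\{g(x_{1}x_{2},x_{1}x_{3})/x_{1}=0\}$, so \cite[Theorem A]{Ka} forces these hypersurfaces to be $\mathbb{Z}_{k}$-acyclic for almost all $k$, which holds if and only if the plane curves $\{f=0\}$ and $\{g=0\}$ are acyclic, hence copies of $\mathbb{A}^{1}$. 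Your own use of contractibility --- an Euler-characteristic count ``killing'' the extra intersection points --- aims at the wrong target: the $d-1$ extra normal crossings are not obstructions to be eliminated but free parameters of the classification (they are exactly what produces the Koras-Russell threefolds alongside $\mathbb{A}^{3}$), and the $d$ in $(c)(iii)$ is just this count, not the integer $d$ of Definition \ref{The-A-H-quotient-1}.

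Two further points. You never rule out additional components of $\mathcal{D}$: the \'etale neighborhood of the exceptional divisor determines $\mathcal{D}$ only near $E$, and a priori there could be further divisors $\{q_{i}\}\otimes D_{i}$ with point coefficients supported away from $E$. The paper's third step excludes these by observing that such components would make $X$ an equivariant cyclic cover branched along divisors whose images in the algebraic quotient avoid the image of the fixed point, contradicting the existence of the fixed point. Finally, your justification of $(c)(i)$ --- that $C_{2},C_{3}$ ``arise as strict transforms of axes in the $\mu$-cover'' --- is wrong as stated (if they were coordinate axes, $X$ would simply be $\mathbb{A}^{3}$ with a linear action); the correct argument, from \cite{P}, is that $D_{2}$ and $D_{3}$ must be invariant under the induced (possibly trivial) $\mu_{a_{1}}$-action on the A-H quotient, and it is this invariance that forces the defining polynomials of $C_{2}$, $C_{3}$ to be $\mu_{a_{1}}$-homogeneous.
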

\begin{proof}
Let $X$ be an affine three space endowed with a hyperbolic $\mathbb{G}_{m}$-action,
we assume that $X$ is equivariantly isomorphic to $\mathbb{S}(Y,\mathcal{D})$
where the pair is minimal constructed as in section one.

First step: the A-H quotient.

By assumption the fixed point set $X^{\mathbb{G}_{m}}$ of $X$ is
of dimension $0$, and by \cite{BB} it is non-empty and connected
and therefore reduced to a unique point, $x_{0}\in X$. Using the
result of \cite{K}, the algebraic quotient $Y_{0}$ of $X$ by the
$\mathbb{G}_{m}$-action is isomorphic to $\mathbb{A}^{2}/\!/\mu$
where $\mu$ is a finite cyclic group. So the A-H quotient $Y$ of
$X$ is isomorphic to a blow-up of $Y_{0}\simeq\mathbb{A}^{2}/\!/\mu$
supported on the image of $x_{0}$ by the quotient morphism $q:X\rightarrow Y_{0}\simeq\mathbb{A}^{2}/\!/\mu$.
Since $X$ is smooth, it follow from \cite{L-P} that there exists
an \'etale $\mathbb{G}_{m}$-invariant neighborhood $\mathcal{U}$ of
$x_{0}$, which is equivariantly isomorphic to an \'etale neighborhood
of a fixed point in $\mathbb{A}^{3}=\mathrm{Spec}(\mathbb{C}[x,y,z])$
endowed with a linear hyperbolic $\mathbb{G}_{m}$-action of the form,
$\lambda\cdot(x,y,z)=(\lambda^{a_{1}}x,\lambda^{a_{2}}y,\lambda^{a_{3}}z)$
with $-a_{1},a_{2},a_{3}>0$. So $Y(X)$ is isomorphic to the A-H
quotient of the previous $\mathbb{A}^{3}$ and determined by the triple
of reduced weights $a_{1},a_{2},a_{3}$.

Considering an embedding of $X$ as a \noun{$\mathbb{G}_{m}$-}stable
subvariety of an affine space endowed with a linear $\mathbb{G}_{m}$-action,
we can always construct a finite cyclic cover along coordinate axes
of the ambient space such that the new variety obtained by this finite
ramified morphism admits an algebraic quotient isomorphic to the complex
plane $\mathbb{A}^{2}$. By this way and using \cite{P}, we can assume
that the A-H presentation of $X$ is completely determined by that
of the cyclic cover and the data of the cyclic group. 

Second step: the ps-divisor $\mathcal{D}$ in an \'etale neighborhood
of the exceptional divisor.

In terms of ps-divisor as $X$ is a smooth $\mathbb{G}_{m}$-variety,
the smoothness criterion gives us that there exists $\mathcal{V}$,
in $Y$, an \'etale neighborhood of the exceptional divisor such that
$\mathbb{S}(\mathcal{V},\mathcal{D}_{\mid\mathcal{V}})$ is equivariantly
isomorphic to an \'etale neighborhood of a fixed point in $\mathbb{A}^{3}$
endowed with a linear hyperbolic $\mathbb{G}_{m}$-action.

Moreover as $X$ is smooth in the \'etale neighborhood of the fixed
point, by \cite{L-P}, $\mathcal{D}_{\mid\mathcal{V}}$ is of the
form:

\[
\mathcal{D}_{\mid\mathcal{V}}=\left\{ \frac{\alpha\rho(a_{1},a_{2})}{-a_{1}}\right\} \otimes D_{2}+\left\{ \frac{\beta\rho(a_{1},a_{3})}{-a_{1}}\right\} \otimes D_{3}+\left[\frac{\gamma}{\delta},\frac{\gamma}{\delta}+\frac{1}{-\delta a_{1}}\right]\otimes E,
\]

\begin{flushleft}
where Proposition \ref{classification A3  } provides the coefficients
and the supports of the divisors in the \'etale neighborhood.
\par\end{flushleft}
Third step: the number of irreducible component of the support of
$\mathcal{D}$.

The ps-divisor$\mathcal{D}$ is completely determined by $\mathcal{D}_{\mid\mathcal{V}}$.
Indeed as $X$ admits a unique fixed point by the first section, $E$
is the unique divisor whose coefficient is an interval with non-empty
interior. Now suppose that 
\[
\mathcal{D}=\left\{ \frac{\alpha\rho(a_{1},a_{2})}{-a_{1}}\right\} \otimes D_{2}+\left\{ \frac{\beta\rho(a_{1},a_{3})}{-a_{1}}\right\} \otimes D_{3}+\left[\frac{\gamma}{\delta},\frac{\gamma}{\delta}+\frac{1}{-\delta a_{1}}\right]\otimes E+\sum_{i=4}^{p}\left\{ q_{i}\right\} \otimes D_{i},
\]
 
\[
\mathrm{with}\,\,\mathcal{D}_{\mid\mathcal{V}}=\left\{ \frac{\alpha\rho(a,c)}{c}\right\} \otimes D_{2\mid\mathfrak{U}}+\left\{ \frac{\beta\rho(b,c)}{c}\right\} \otimes D_{3\mid\mathfrak{U}}+\left[\frac{\gamma}{\delta},\frac{\gamma}{\delta}+\frac{1}{\delta c}\right]\otimes E.
\]
 By the first section, this implies that there exists a $\mathbb{G}_{m}$-variety
$\hat{X}$ such that $c:X\rightarrow\hat{X}$ is an equivariant cyclic
cover of $\hat{X}$ along divisors whose images in the algebraic quotient
do not intersect the image of the fixed point and thus $X$ does not
admit a fixed point. This contradicts the assumption and so $X$ is
equivariantly isomorphic to $\mathbb{S}(Y,\mathcal{D})$ where $Y$
is isomorphic to the A-H quotient of an $\mathbb{A}^{3}$ endowed
with a hyperbolic linear $\mathbb{G}_{m}$-action and $\mathcal{D}$
is of the form:

\[
\mathcal{D}=\left\{ \frac{\alpha\rho(a_{1},a_{2})}{-a_{1}}\right\} \otimes D_{2}+\left\{ \frac{\beta\rho(a_{1},a_{3})}{-a_{1}}\right\} \otimes D_{3}+\left[\frac{\gamma}{\delta},\frac{\gamma}{\delta}+\frac{1}{-\delta a_{1}}\right]\otimes E,
\]
where $E$ is the exceptional divisor of $\pi:Y\rightarrow Y_{0}$
and $D_{2}$, $D_{3}$ are the strict transform of the supports of
two irreducible curves $C_{2}$ and $C_{3}$ defined by the zeros
of $f\in\mathbb{C}[u,v]$ and $g\in\mathbb{C}[u,v]$ respectively.

Fourth step: the topology of the support of $\mathcal{D}$.

Once again by the smoothness criterion the support of $\mathcal{D}$
has to be a SNC-divisor, in particular each irreducible component
of the divisors is smooth. Moreover using the previous presentation
and \cite{P}, $X$ admits two actions of finite cyclic group $\mu_{\zeta}$
and $\mu_{\xi}$ which factorize by the $\mathbb{G}_{m}$ action.
In particular $X$ can be viewed as a bi-cyclic cover of $\mathbb{A}^{3}$
of order $\zeta$ and $\xi$, and it admits the following presentation:

\[
X=\mathrm{Spec}\left(\mathbb{C}[x_{1},x_{2},x_{3},x_{4},x_{5}]/\left(\begin{array}{c}
x_{4}^{\zeta}=f(x_{1}x_{2},x_{1}x_{3})/x_{1}\\
x_{5}^{\xi}=g(x_{1}x_{2},x_{1}x_{3})/x_{1}
\end{array}\right)\right).
\]

Thus we obtain the following tower of threefolds where $X//(\mu_{\zeta}\times\mu_{\xi})$
is equivariantly isomorphic to $\mathbb{A}^{3}$ with a linear $\mathbb{G}_{m}$-action
:

\[
\xyC{2pc}\xymatrix{ & X\ar[ld]\ar[rd]\\
X//\mu_{\zeta}\ar[rd] &  & X//\mu_{\xi}\ar[ld]\\
 & X//(\mu_{\zeta}\times\mu_{\xi})\simeq\mathbb{A}^{3}
}
\]

As $X$ is contractible, so are of $X//\mu_{\zeta}$ and $X//\mu_{\xi}$
by \cite[Theorem B]{KrPRa}. These varieties are obtained by two modifications:
first a hyperbolic modification of $f$ (or $g$ respectively) see
Definition \ref{Hyperbolic modification}. By this way we obtain two
varieties $X_{f}=\{f(x_{1}x_{2},x_{1}x_{3})/x_{1}=0\}\subset\mathbb{A}^{3}$
and $X_{g}\{g(x_{1}x_{2},x_{1}x_{3})/x_{1}=0\}\subset\mathbb{A}^{3}$
which are stable for the following hyperbolic $\mathbb{G}_{m}$-action
on $\mathbb{A}^{3}$, $\lambda\cdot(x_{1},x_{2},x_{3})=(\lambda^{-1}x_{1},\lambda x_{2},\lambda x_{3})$. 

The second modification is a cyclic cover of order $\zeta$ (or $\xi$
respectively) of $\mathbb{A}^{3}=\mathrm{Spec}(\mathbb{C}[x_{1},x_{2},x_{3}])$
along the variety $\{f(x_{1}x_{2},x_{1}x_{3})/x_{1}=0\}$ (or $\{g(x_{1}x_{2},x_{1}x_{3})/x_{1}=0\}$
respectively). By \cite[Theorem A]{Ka}, as the varieties $X//\mu_{\zeta}$
and $X//\mu_{\xi}$ are contractible, the sub-varieties $\{f(x_{1}x_{2},x_{1}x_{3})/x_{1}=0\}$
and $\{g(x_{1}x_{2},x_{1}x_{3})/x_{1}=0\}$ of $\mathbb{A}^{3}=\mathrm{Spec}(\mathbb{C}[x_{1},x_{2},x_{3}])$
have to be $\mathbb{Z}_{k}$-acyclic for almost every $k$, that is
the $\mathbb{Z}_{k}$-homology of the point for almost every $k$.
By a classical classification result, this is possible if and only
if the smooth curves defined by $\{f(u,v)=0\}$ and $\{g(u,v)=0\}$
in $\mathbb{A}^{2}=\mathrm{Spec}(\mathbb{C}[u,v])$ are acyclic and
so, are two copies of the affine line $\mathbb{A}^{1}$.

To summarize, every $\mathbb{A}^{3}$ endowed with a hyperbolic $\mathbb{G}_{m}$-action
is characterized by a pair $(Y,\mathcal{D})$ such that $\mathbb{A}^{3}\simeq\mathbb{S}(Y,\mathcal{D})$.
The A-H quotient $Y$ of $\mathbb{A}^{3}$ is also the A-H quotient
of an $\mathbb{A}^{3}$ endowed with a hyperbolic and linear $\mathbb{G}_{m}$-action.
The coefficient of $\mathcal{D}$ is the same as those used in the
presentation of the $\mathbb{A}^{3}$ endowed with a hyperbolic linear
$\mathbb{G}_{m}$-action and the support of $\mathcal{D}$ is the
union of the exceptional divisor, the strict transform of two curves,
$C_{2}$ and $C_{3}$, both isomorphic to a line in $\mathbb{A}^{2}$,
given by polynomials of weights $a_{2}$, $a_{3}$ respectively and
intersecting normally at the origin and in $(n-1)$ other points,
where $n\geq1$.

Now as the algebraic quotient of $X$ is not necessarily $\mathbb{A}^{2}$,
it has been shown that there exists a cyclic group of order $\mu$
such that the algebraic quotient is $\mathbb{A}^{2}/\mu$. The tangent
space of $X$ at the fixed point is an affine three-space with a linear
hyperbolic $\mathbb{G}_{m}$-action given by $\lambda\cdot(x,y,z)=(\lambda^{-a_{1}}x,\lambda^{a_{2}}y,\lambda^{a_{3}}z)$
with $a_{1}$, $a_{2}$ and $a_{3}$ positive integers. By construction
the order of the finite cyclic group $\mu$ can be chosen as to be
$a_{1}$. The general case is obtained as a quotient of the previous
case by the cyclic group. In particular as proved in \cite{P}, the
two divisor $D_{2}$ and $D_{3}$ have to be invariant for the induced
(and possibly trivial) action on the A-H quotient, so $C_{2}$ and
$C_{3}$ are homogeneous for $\mu_{a_{1}}$.
\end{proof}
All possible varieties that can be built, and that verify the previous
Theorem are not necessary $\mathbb{A}^{3}$. There is a dichotomy,
obtained in \cite{KaML}, using the additive group action $\mathbb{G}_{a}$
on them, in particular they prove that the one class of varieties
obtained by Koras and Russell in \cite{KR4} which are indeed $\mathbb{A}^{3}$
are those which are ``obviously'' $\mathbb{A}^{3}$ with a linear
hyperbolic $\mathbb{G}_{m}$-action. This corresponds to the case
where $D_{2}$ and $D_{3}$ are the two coordinate axes in the same
coordinate system in Theorem \ref{thm:A-smooth,-contractible}. The,
remaining varieties are called Koras-Russell threefolds and classified
in three types according to the richness of the $\mathbb{G}_{a}$-actions
on them. The A-H presentation of these varieties has been computed
in \cite{P}. 
\begin{example}
Let $X$ be the $\mathbb{G}_{m}$-variety equivariantly isomorphic
to $\mathbb{S}(\tilde{\mathbb{A}}_{(u,v^{d})}^{2},\mathcal{D})$ where
$\pi:\tilde{\mathbb{A}}_{(u,v^{d})}^{2}\rightarrow\mathbb{A}^{2}$
is the blow-up of $\mathbb{A}^{2}=\mathrm{Spec}(\mathbb{C}[u,v])$
with center at the closed subscheme defined by the ideal $\mathcal{I}=(u,v^{d})$,
the p-s divisor $\mathcal{D}$ is of the form

\[
\mathcal{D}=\left\{ \frac{1}{3}\right\} \otimes D_{1}+\left\{ \frac{-3}{5}\right\} \otimes D_{2}+\left[0,\frac{1}{15}\right]\otimes E,
\]
 where $E$ is the exceptional divisor of the blow-up $\pi$ and $D_{1}$,
$D_{2}$ are the strict transforms of $\{u=0\}$ and $\{v+(u+v^{2})^{3}=0\}$,
then $X$ is a smooth contractible threefold endowed with an hyperbolic
$\mathbb{G}_{m}$-action but it is not $\mathbb{A}^{3}$. In particular
$X=\{x+y^{5}(x^{2}+z^{3})^{3}+t^{5}=0\}\subset\mathbb{A}^{4}=\mathrm{Spec}(\mathbb{C}[x,y,z,t])$.
\end{example}

\end{document}